\newtheorem{theorem}{Theorem}[section]
\newtheorem{lemma}[theorem]{Lemma}
\newtheorem{proposition}[theorem]{Proposition}
\theoremstyle{definition}
\newtheorem{definition}[theorem]{Definition}
\theoremstyle{remark}
\newtheorem{remark}[theorem]{Remark}
\title{An induced subgraph of the Hamming graph \\with maximum degree 1}
\author{Vincent Tandya \thanks{Email address: \href{mailto:vincent.tandya@u.nus.edu}{vincent.tandya@u.nus.edu}.}}
\date{}
\begin{document}

\maketitle

\begin{abstract}
For every graph $G$, let $\alpha(G)$ denote its independence number. What is the minimum of the maximum degree of an induced subgraph of $G$ with $\alpha(G)+1$ vertices? We study this question for the $n$-dimensional Hamming graph over an alphabet of size $k$. In this paper, we give a construction to prove that the answer is $1$ for all $n$ and $k$ with $k \geq 3$. This is an improvement over an earlier work showing that the answer is at most $\lceil \sqrt{n} \, \rceil$.
\end{abstract}

\textbf{Keywords:} Hamming graph, induced subgraphs

\section{Introduction} 
For every graph $G$, denote $\alpha(G)$ as the independence number of $G$, and $f(G)$ as the minimum of the maximum degree of an induced subgraph of $G$ with $\alpha(G)+1$ vertices. The Hamming graph $H(n, k)$ is a graph whose vertex set consists of $k^n$ vertices labelled by vectors in $\{0, 1, 2, \dots, k-1\}^n$, where two vertices are adjacent if and only if their corresponding vectors differ in exactly one coordinate. 

A special case of the Hamming graphs is the $n$-dimensional hypercube graph $H(n, 2)$, also often denoted by $Q^n$. The graph $Q^n$ is of interest due to the relation between $f(Q^n)$ and the Sensitivity Conjecture in the field of theoretical computer science. Chung, F\"{u}redi, Graham, and Seymour \cite{Chung} provided a construction proof to show that $f(Q^n) \leq \lceil \sqrt{n} \, \rceil$, and Huang \cite{Huang} later proved that $f(Q^n) \geq \lceil \sqrt{n} \, \rceil$. Huang's result is significant since it is known to be equivalent to the Sensitivity Conjecture. For readers interested in the conjecture, we refer to a survey by Hatami, Kulkarni, and Pankratov \cite{HKP} regarding several problems related to the conjecture, and another survey by Karthikeyan, Sinha, and Patil \cite{KSP} regarding results relevant to Huang's proof.

More recently, Dong \cite{Dong} generalised the upper bound established by Chung et al.\ to other Hamming graphs. By generalising their construction, Dong proved that the inequality $f(H(n, k)) \leq \lceil \sqrt{n} \, \rceil$ also holds when $k \geq 3$. However, the work did not establish a lower bound for the quantity.

In this paper, we improve Dong's result by proving the following theorem.

\begin{theorem} \label{theorem:main}
For all positive integers $n$ and $k$ with $k \geq 3$, there exists an induced subgraph of $H(n, k)$ with $\alpha(H(n, k))+1$ vertices and maximum degree $1$.
\end{theorem}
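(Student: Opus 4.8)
The plan is to first pin down $\alpha(H(n,k))$ and then realise the extremal set as a near-perfect induced matching. Writing $\sigma(x)=\sum_i x_i \bmod k$, the coset $\{x:\sigma(x)=0\}$ is independent and has $k^{n-1}$ vertices, while the $k^{n-1}$ lines in a fixed coordinate direction are cliques that cover all of $V(H(n,k))$; an independent set meets each such clique at most once, so $\alpha(H(n,k))=k^{n-1}$. A set of $k^{n-1}+1$ vertices therefore cannot be independent, and the real target is a set $S$ with $\Delta(G[S])\le 1$. Such an $S$ is a disjoint union of edges and isolated vertices, and since its isolated part is itself independent (hence of size at most $k^{n-1}$), $S$ must contain at least one edge. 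I would thus build $S$ as a large induced matching together with a controlled number of isolated vertices.

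To organise this I would induct on $n$ via the layer decomposition of $H(n,k)$ into $k$ copies $L_0,\dots,L_{k-1}$ of $H(n-1,k)$ indexed by the last coordinate, where for each $x\in\mathbb{Z}_k^{n-1}$ the vertices $(x,0),\dots,(x,k-1)$ form a vertical clique. Given sets $D_0,\dots,D_{k-1}\subseteq\mathbb{Z}_k^{n-1}$, I place $\{(x,c):x\in D_c\}$ in layer $c$. If the $D_c$ partition $\mathbb{Z}_k^{n-1}$ and each induces maximum degree at most $1$ in $H(n-1,k)$, then no two chosen vertices lie in a common vertical clique, so the resulting set has exactly $k^{n-1}$ vertices and maximum degree at most $1$. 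To gain the extra vertex I would look for a point $y\in D_0$ that is isolated inside $D_0$ and satisfies $N(y)\cap D_1=\emptyset$; adding the single vertex $(y,1)$ then creates exactly one new edge, namely $(y,0)(y,1)$, and keeps every degree at most $1$. This reduces the whole theorem to constructing a partition of $\mathbb{Z}_k^{n-1}$ into maximum-degree-$\le 1$ parts that carries such a marked point $y$.

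The construction of this partition is the main obstacle, and it is exactly where the hypothesis $k\ge 3$ enters. The obvious candidate, the partition into cosets of $\sigma$, fails because cosets are rigid: every vertex outside a coset has precisely $n-1$ neighbours inside it, so no point can avoid a neighbouring part and the marked-point condition is unreachable — this same rigidity is what keeps the hypercube answer at $\lceil\sqrt n\,\rceil$. I must therefore use non-coset parts that are locally sparse near $y$. The neighbourhood of $y$ in $H(n-1,k)$ splits into $n-1$ disjoint cliques, one $K_{k-1}$ per coordinate direction, and the maximum-degree-$\le 1$ requirement lets any part absorb at most two vertices of each such clique. Keeping $N(y)$ out of both $D_0$ and $D_1$ means distributing each clique among the remaining $k-2$ parts, which is possible exactly when $k-1\le 2(k-2)$, i.e.\ when $k\ge 3$; this inequality is the arithmetic heart of the improvement and the reason $k=2$ is excluded. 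The most delicate work that remains is to perform this local rerouting around $y$ while simultaneously partitioning the rest of $\mathbb{Z}_k^{n-1}$ into maximum-degree-$\le 1$ parts, which I expect is cleanest to package as an induction that explicitly maintains the existence of the marked point at every step.
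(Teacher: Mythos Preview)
Your framework is sensible but the argument is not complete: you reduce the theorem to the existence of a partition $D_0,\dots,D_{k-1}$ of $\mathbb{Z}_k^{\,n-1}$ into parts with maximum degree at most $1$ in $H(n-1,k)$, together with a point $y\in D_0$ that is isolated in $D_0$ and has $N(y)\cap D_1=\emptyset$, and then you stop. Two things are missing. First, the induction is not set up: the theorem at level $n-1$ yields a single set of size $k^{n-2}+1$, not a partition of $\mathbb{Z}_k^{\,n-1}$ with a marked point, so you would need to formulate and prove a strictly stronger inductive statement, and you have not said what that statement is or why it propagates from $n-1$ to $n$. Second, the inequality $k-1\le 2(k-2)$ is only a local feasibility check on the $n-1$ coordinate-cliques that make up $N(y)$; it does not tell you how to partition the remaining $k^{n-1}-1-(n-1)(k-1)$ vertices consistently while keeping every part of maximum degree at most $1$ and while still respecting the marked-point conditions. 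That global construction is the entire content of the problem, and it is not addressed.

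For comparison, the paper bypasses induction and the layer decomposition altogether. It gives an explicit set built from pieces of two cosets of $\sigma$: writing $\ell(v)$ for the index of the last nonzero coordinate of $v$ and $Y(s,t)=\{v:\sigma(v)=s,\ v(\ell(v))=t\}$, it takes $W=Y(1,1)\cup\bigcup_{i=2}^{k-1}Y(2,i)$. Both $Y(1,1)\subseteq X(1)$ and $\bigcup_{i\ge 2}Y(2,i)\subseteq X(2)$ are independent; a short case analysis on $\ell$ shows there are no edges between $Y(1,1)$ and $Y(2,i)$ for $i\ge 3$, while between $Y(1,1)$ and $Y(2,2)$ the map ``replace the last nonzero coordinate $1$ by $2$'' is a bijection giving a perfect matching. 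A direct count then gives $|W|=k^{n-1}+1$. The key idea your proposal never reaches is to slice the cosets by the \emph{value of the last nonzero coordinate}; this is what makes the construction explicit and reduces the degree analysis to a two-line check.
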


As an immediate consequence, we have $f(H(n, k)) = 1$ for all $k \geq 3$. Combined with the results by Chung et al.\ and Huang, we have the following conclusion.

\begin{theorem} \label{theorem:combined}
For all positive integers $n$, we have 
\[f(H(n, k)) = \begin{cases} \lceil \sqrt{n} \, \rceil & \text{if } k = 2, \\ 1 & \text{if } k \geq 3.\end{cases}\]
\end{theorem}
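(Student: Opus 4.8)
The plan is to first pin down the target size and then reduce to a purely combinatorial statement. Since any independent set of $H(n,k)$ meets each line in a fixed direction (the $k$ vertices agreeing in all but one coordinate) at most once, projecting out a coordinate gives $\alpha(H(n,k)) \le k^{n-1}$, and the parity code $\{x : \sum_i x_i \equiv 0 \pmod k\}$ attains this, so $\alpha(H(n,k)) = k^{n-1}$. It therefore suffices to exhibit a set $S \subseteq \mathbb{Z}_k^n$ (writing $\mathbb{Z}_k$ for the alphabet, with arithmetic mod $k$) of size $k^{n-1}+1$ whose induced subgraph has maximum degree at most $1$: because $|S| > \alpha(H(n,k))$ the set $S$ cannot be independent, so it must contain an edge, which forces the maximum degree to be exactly $1$. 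Equivalently, it is enough to produce any induced subgraph of maximum degree at most $1$ on more than $k^{n-1}$ vertices and then delete vertices down to $k^{n-1}+1$.

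I would next reformulate the task through the product structure $H(n,k) = K_k \,\square\, H(n-1,k)$. Writing each vertex as $(a,u)$ with $a \in \mathbb{Z}_k$ a \emph{level} and $u \in \mathbb{Z}_k^{n-1}$ a \emph{column}, a candidate $S$ is encoded by its level-sets $U_a = \{u : (a,u) \in S\}$. Two chosen vertices are adjacent precisely when they share a column (same $u$, different levels) or lie in a common level on adjacent columns. Hence $S$ has maximum degree at most $1$ if and only if each $U_a$ induces in $H(n-1,k)$ a subgraph of maximum degree at most $1$ (a dissociation set) and, moreover, every column used on two levels is \emph{isolated} in both of the corresponding $U_a$. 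Demanding $\sum_a |U_a| = k^{n-1}+1$ with every column used thus amounts to covering $\mathbb{Z}_k^{n-1}$ by dissociation sets so that exactly one column is doubled. Equivalently, one is seeking an induced matching between two adjacent parity classes $C_0$ and $C_1$ that saturates $k^{n-1}+1$ of their $2k^{n-1}$ vertices; this is already how the case $n=2$ arises, where $C_0 \cup C_1$ is a single $2k$-cycle whose dissociation number $\lfloor 4k/3 \rfloor$ is at least $k+1$.

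The delicate point, and what I expect to be the main obstacle, is that one cannot simply add a single vertex to a maximum independent set. Indeed, the projection argument shows that every maximum independent set of $H(n,k)$ is the graph of a Latin-hypercube function, so it meets every line exactly once; consequently any outside vertex has exactly $n$ neighbours in it, one per direction, and naive augmentation immediately creates a vertex of degree $n$. The room afforded by $k \ge 3$ must be spent to \emph{reroute} the $n-1$ surplus conflicts. Concretely, I would begin from the parity partition $U_a = \{u : \sum_i u_i \equiv a\}$, choose one column $u_0 \in U_{a_0}$ to double into the level $b = a_0+1$, and move the $n-1$ neighbours of $u_0$ that lie in level $b$ (these are pairwise non-adjacent, since they differ in two coordinates) off level $b$ into spare levels, where $k \ge 3$ guarantees that a third value is available. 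The difficulty is that relocating a column into another parity class reintroduces $n-1$ fresh conflicts there, so the repair must be organised globally rather than locally.

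To make this rigorous I would pursue one of two routes, and I would expect the write-up to hinge on carrying one of them out cleanly. The first is to give a closed-form induced matching between $C_0$ and $C_1$ that covers $k^{n-1}+1$ vertices for all $n$ and all $k \ge 3$, generalising the explicit ten-vertex configuration that already works for $n=k=3$ (five edges split between two directions, realising a dissociation set of the maximum possible size in $C_0 \cup C_1$). The second is an induction on $n$ whose hypothesis is strengthened to supply, in $H(n-1,k)$, a partition into dissociation sets together with a distinguished vertex non-adjacent to one entire class; this is exactly the data needed to perform the doubling in the level-assignment of $H(n,k)$, and it is preserved by the product step. In either case the crux is to confine the cascade of rerouted conflicts so that the process terminates with every level-set still a dissociation set and exactly one doubled column, after which the reduction in the first paragraph finishes the proof.
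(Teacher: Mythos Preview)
Your proposal is not a proof; it is a plan that stops precisely where the actual work begins. You correctly compute $\alpha(H(n,k))=k^{n-1}$, correctly reduce to exhibiting a set of size $k^{n-1}+1$ with induced maximum degree at most $1$, and correctly diagnose the obstruction that any maximum independent set meets every line, so adjoining a single vertex produces degree $n$. But from that point on you only describe two possible strategies (an explicit induced matching between two parity classes, or a strengthened induction on $n$) and explicitly acknowledge that ``the repair must be organised globally'' and that ``the write-up [would] hinge on carrying one of them out cleanly.'' Neither is carried out: there is no construction, no verification of degrees, and no size count. The cascade problem you flag is real, and nothing in the proposal shows that either route terminates. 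You also omit the $k=2$ case entirely; the statement covers it, and there one must invoke the Chung--F\"uredi--Graham--Seymour upper bound together with Huang's matching lower bound.

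For comparison, the paper's construction is short and does not use your level/column decomposition at all. It refines each parity class $X(s)$ by the value of the \emph{last nonzero coordinate}: set $Y(s,t)=\{v\in X(s)\setminus\{0\}:\text{the last nonzero entry of }v\text{ equals }t\}$ and take
\[
W \;=\; Y(1,1)\ \cup\ \bigcup_{i=2}^{k-1} Y(2,i).
\]
Then $Y(1,1)\subseteq X(1)$ and $\bigcup_{i\ge 2}Y(2,i)\subseteq X(2)$ are each independent, and a short case analysis on whether $\ell(v)\gtreqless \ell(w)$ shows that a vertex of $Y(1,1)$ has no neighbour in any $Y(2,i)$ with $i\ge 3$ and exactly one neighbour in $Y(2,2)$ (obtained by changing the last nonzero coordinate from $1$ to $2$). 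A direct count gives $|Y(s,t)|=(k^{n-1}-1)/(k-1)+[s=t]$, whence $|W|=k^{n-1}+1$. This is exactly the ``closed-form induced matching between $C_0$ and $C_1$'' you hoped for, but produced via a coordinate trick you did not identify; the missing idea is to stratify by the position and value of the last nonzero entry rather than by the first coordinate.
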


\section{Notation and preliminaries} 
For every graph $G$, let $V(G)$ denote the vertex set of $G$. For every vertex $v \in V(H(n, k))$ and every integer $1 \leq i \leq n$, let $v(i)$ denote the $i$-th coordinate of $v$. We can therefore view the vertex $v$ as the vector of $n$ coordinates $(v(1), v(2), \dots, v(n))$, where each coordinate is in $\mathbb{Z}_k := \{0, 1, 2, \dots, k-1\}$.

We will perform several arithmetic operations on the vertex coordinates. These are all performed in modulo $k$. For brevity, we write $a \equiv b$ to state that $a$ and $b$ are congruent modulo $k$, and we write $a \not \equiv b$ to state the opposite.

We now prove the following facts to determine the value of $\alpha(H(n, k))$.

\begin{definition} \label{definition:set_x}
For every $s \in \mathbb{Z}_k$, define the vertex set $X(s)$ as the set \[X(s) := \left\{v \in V(H(n, k)) \colon \sum_{i=1}^n v(i) \equiv s\right\}.\]
\end{definition}

\begin{lemma} \label{lemma:set_x_independent}
For every $s \in \mathbb{Z}_k$, the vertex set $X(s)$ is an independent vertex set.
\end{lemma}

\begin{proof}
It suffices to show that for every pair of adjacent vertices $v$ and $w$ in $H(n, k)$, there is no $s \in \mathbb{Z}_k$ such that both $v$ and $w$ are in $X(s)$. Indeed, if $v$ and $w$ are adjacent vertices, then $v$ and $w$ are different in exactly one coordinate, and thus \[\sum_{i=1}^n v(i) \not \equiv \sum_{i=1}^n w(i).\] As a result, we cannot have both $v \in X(s)$ and $w \in X(s)$ hold simultaneously.
\end{proof}

\begin{proposition} \label{proposition:independence_number}
The independence number of the Hamming graph $H(n, k)$ is $k^{n-1}$.
\end{proposition}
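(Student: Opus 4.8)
The plan is to establish the two inequalities $\alpha(H(n,k)) \geq k^{n-1}$ and $\alpha(H(n,k)) \leq k^{n-1}$ separately, and then combine them.

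For the lower bound, I would first compute the size of each set $X(s)$ from Definition~\ref{definition:set_x}. Fixing arbitrary values for the first $n-1$ coordinates, there is exactly one choice of the $n$-th coordinate making the coordinate sum congruent to $s$ modulo $k$; hence $|X(s)| = k^{n-1}$ for every $s \in \mathbb{Z}_k$. Since Lemma~\ref{lemma:set_x_independent} guarantees that each $X(s)$ is an independent set, exhibiting one such set of size $k^{n-1}$ immediately yields $\alpha(H(n,k)) \geq k^{n-1}$.

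For the upper bound, the key idea is to partition the vertex set of $H(n,k)$ into cliques of size $k$. For each fixed assignment of the first $n-1$ coordinates, I would let the $n$-th coordinate range over all of $\mathbb{Z}_k$; the resulting $k$ vertices pairwise differ in exactly one coordinate (the last one), so they form a clique. These $k^{n-1}$ cliques are disjoint and together cover all $k^n$ vertices, so they partition $V(H(n,k))$. Any independent set can contain at most one vertex from each clique, which forces its size to be at most $k^{n-1}$, giving $\alpha(H(n,k)) \leq k^{n-1}$.

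Combining the two bounds yields $\alpha(H(n,k)) = k^{n-1}$. I expect the upper bound to be the only step requiring a genuine idea, the crux being the observation that fixing all but one coordinate produces a clique and that these cliques tile the vertex set; the lower bound then follows as a direct corollary of the preceding lemma together with an elementary count.
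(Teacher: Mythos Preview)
Your proposal is correct and follows essentially the same approach as the paper: the paper's pigeonhole argument on the first $n-1$ coordinates is exactly your clique-partition observation phrased contrapositively, and the count of $|X(s)|$ is the same up to which coordinate is left free. No substantive difference.
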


\begin{proof}
First, we show that $\alpha(H(n, k)) \leq k^{n-1}$ by contradiction. Let us assume that there exists an independent induced subgraph of $H(n, k)$ with $k^{n-1}+1$ vertices. By pigeonhole principle, there exist two distinct vertices in the subgraph whose first $n-1$ coordinates are identical. But this implies that the vertices are different only at the $n$-th coordinate. Hence, the two vertices are adjacent. This contradicts our initial assumption.

Next, we prove that the independent vertex set $X(s)$ defined in Definition~\ref{definition:set_x} has exactly $k^{n-1}$ vertices for every $s \in \mathbb{Z}_k$. Indeed, we know $v \in X(s)$ if and only if \[v(1) \equiv s - \sum_{i=2}^{n} v(i).\] Observe that for each of the possible $k^{n-1}$ values for $v(2), v(3), \dots, v(n)$, there is exactly one possible value for $v(1)$ to satisfy the equation. It follows that $|X(s)| = k^{n-1}$.
\end{proof}

\section{Construction of the induced subgraph}

As we have just shown, the vertex set $X(s)$ is independent and has exactly $\alpha(H(n, k))$ vertices for every $s \in \mathbb{Z}_k$. However, to prove Theorem~\ref{theorem:main}, we could not simply add another vertex to $X(s)$ since it would cause the maximum degree to suddenly jump to $n$. In fact, the added vertex would be adjacent to all vertices in $X(s)$. On the other hand, Lemma~\ref{lemma:set_x_independent} shows a useful property of $X(s)$. We are using this property to prove our main theorem.

We introduce the following definitions first.

\begin{definition} \label{definition:ell}
For every vertex $v \in V(H(n, k)) - \{(0, 0, \dots, 0)\}$, define $\ell(v)$ to be the largest integer $1 \leq i \leq n$ such that $v(i) \neq 0$.
\end{definition}

\begin{definition} \label{definition:set_y}
For every $s, t \in \mathbb{Z}_k$ with $t \neq 0$, define the vertex set $Y(s, t)$ as the set
\[Y(s, t) := \left\{v \in V(H(n, k)) - \{(0, 0, \dots, 0)\} \colon \sum_{i=1}^n v(i) \equiv s \text{ and } v(\ell(v)) = t\right\}.\]
In other words, the vertex set $Y(s, t)$ is the set of all vertices in $X(s)$ whose last nonzero coordinate is $t$.
\end{definition}

Next, we define the following vertex set to be used to prove Theorem~\ref{theorem:main}.

\begin{definition} \label{definition:set_w}
Define the vertex set $W$ as the set
\[W := Y(1, 1) \cup \left(\bigcup_{i=2}^{k-1} Y(2, i) \right).\] 
\end{definition}

\section{Analysis of the construction}
Recall that Proposition~\ref{proposition:independence_number} shows $\alpha(H(n, k)) = k^{n-1}$. In this section, we aim to prove Theorem~\ref{theorem:main} by proving that the subgraph of $H(n, k)$ induced by the vertex set $W$ defined in Definition~\ref{definition:set_w} has exactly $\alpha(H(n, k))+1 = k^{n-1}+1$ vertices and maximum degree $1$.

\subsection{Number of the vertices in the subgraph}

To find the number of vertices in $W$, it is sufficient to find $|Y(s, t)|$ for each $s, t \in \mathbb{Z}_k$.

\begin{lemma} \label{lemma:size_y}
For every $s, t \in \mathbb{Z}_k$ with $t \neq 0$, we have
\[
|Y(s, t)| = 
	\begin{cases} 
		(k^{n-1}-1)/(k-1) + 1 & \text{if } s = t, \\ 
		(k^{n-1}-1)/(k-1) & \text{otherwise}. 
	\end{cases}
\]
\end{lemma}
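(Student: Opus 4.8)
The plan is to partition $Y(s,t)$ according to the position of its last nonzero coordinate, that is, according to the value $j := \ell(v) \in \{1, 2, \dots, n\}$. For a fixed $j$, a vertex $v$ lies in $Y(s,t)$ with $\ell(v) = j$ precisely when $v(j) = t$, all later coordinates $v(j+1), \dots, v(n)$ vanish, and the coordinate sum satisfies $\sum_{i=1}^n v(i) \equiv s$. Since $v(j) = t$ and the coordinates beyond $j$ are zero, this last condition is equivalent to $\sum_{i=1}^{j-1} v(i) \equiv s - t$, a constraint only on the free prefix $(v(1), \dots, v(j-1)) \in \mathbb{Z}_k^{j-1}$. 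First I would count, for each $j$, the number of such prefixes, and then sum these counts over all $j$.

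For the count of prefixes I would reuse the elementary fact established inside the proof of Proposition~\ref{proposition:independence_number}: for any $m \geq 1$, the number of vectors in $\mathbb{Z}_k^m$ whose coordinates sum to a prescribed residue is exactly $k^{m-1}$, since the last $m-1$ coordinates may be chosen freely and the first coordinate is then forced. Applying this with $m = j-1$ shows that, for every $j$ with $2 \leq j \leq n$, there are exactly $k^{j-2}$ valid prefixes with $\sum_{i=1}^{j-1} v(i) \equiv s-t$, independently of the value of $s - t$.

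The only delicate case is $j = 1$, where the prefix is empty and the sum condition degenerates. Here the empty sum is $0$, so the constraint becomes $0 \equiv s - t$, which holds if and only if $s = t$ (recalling that $s$ and $t$ are representatives in $\mathbb{Z}_k$). Thus $j = 1$ contributes the single vertex $(t, 0, \dots, 0)$ when $s = t$ and nothing otherwise; this boundary term is exactly what produces the extra $+1$ in the first case of the lemma. I expect this degenerate empty-prefix case to be the main point requiring care, since everything else is routine enumeration.

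Finally I would sum the contributions. Summing $k^{j-2}$ over $j = 2, \dots, n$ gives the geometric series $\sum_{m=0}^{n-2} k^m = (k^{n-1}-1)/(k-1)$, and adding the $j=1$ contribution yields $(k^{n-1}-1)/(k-1) + 1$ when $s = t$ and $(k^{n-1}-1)/(k-1)$ otherwise, matching the claimed formula. As a sanity check, when $n = 1$ both the geometric sum and $(k^{n-1}-1)/(k-1)$ vanish, and the formula correctly reduces to $|Y(s,t)| = 1$ if $s = t$ and $0$ otherwise.
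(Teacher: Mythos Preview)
Your proposal is correct and follows essentially the same approach as the paper: both partition $Y(s,t)$ according to the index $\ell(v)$ of the last nonzero coordinate, observe that the slice with $\ell(v)=1$ contributes $1$ or $0$ according as $s=t$ or not, count $k^{j-2}$ vertices in each slice $2\le j\le n$, and sum the resulting geometric series. The only cosmetic differences are that the paper names the slices $Z(s,t,c)$ and re-derives the prefix count inline rather than citing Proposition~\ref{proposition:independence_number}.
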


\begin{proof}
For every integer $1 \leq c \leq n$ and $s, t \in \mathbb{Z}_k$ with $t \neq 0$, define the vertex set $Z(s, t, c)$ as the set
\[Z(s, t, c) := \{v \in Y(s, t) \colon \ell(v) = c\}.\]
Then $v \in Z(s, t, c)$ if and only if $v$ satisfies the system
\begin{align*}
	\sum_{i=1}^n v(i) &\equiv s,\\
	v(c) &= t,\\
	v(j) &= 0 \quad \text{for all } c < j \leq n.
\end{align*}
Consider the following two cases.
\begin{itemize}
\item Suppose $c = 1$. Then the system is satisfied if and only if both $v = (s, 0, 0, \dots, 0)$ and $v = (t, 0, 0, \dots, 0)$ hold simultaneously. Consequently, 
\[
|Z(s, t, 1)| = 
	\begin{cases} 
		1 & \text{if } s = t, \\ 
		0 & \text{otherwise}.
	\end{cases}
\]
\item Suppose $2 \leq c \leq n$. Then, to satisfy the system, there is only one possible value for each of the coordinates $v(c), v(c+1), v(c+2), \dots, v(n)$; that is, $v(c) = t$ and $v(c+1) = v(c+2) = \dots = v(n) = 0$. 
We also have \[v(1) \equiv s - \sum_{i=2}^n v(i) \equiv  s - \left(\sum_{i=2}^{c-1} v(i)\right) - t.\]
Hence, for each of the possible $k^{c-2}$ values for $v(2), v(3), \dots, v(c-1)$, there is exactly one possible value for $v(1)$ to satisfy the system. It follows that $|Z(s, t, c)| = k^{c-2}$ for all $2 \leq c \leq n$. 
\end{itemize}
In conclusion,
\[
|Y(s, t)| 
	= \sum_{c=1}^n |Z(s, t, c)| 
	= 
	\begin{cases} 
		1 + k^0 + k^1 + k^2 + \cdots + k^{n-2} & \text{if } s = t, \\ 
		0 + k^0 + k^1 + k^2 + \cdots + k^{n-2} & \text{otherwise}. 
	\end{cases}
\]
A straightforward algebraic manipulation of the above gives the required result.
\end{proof}

Now, we are able to find the number of vertices in $W$.

\begin{lemma} \label{lemma:size_w}
There are exactly $k^{n-1}+1$ vertices in the vertex set $W$.
\end{lemma}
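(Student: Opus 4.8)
The plan is to compute $|W|$ by writing it as a disjoint union and then summing the cardinalities supplied by Lemma~\ref{lemma:size_y}. First I would observe that the sets appearing in Definition~\ref{definition:set_w} are pairwise disjoint: every nonzero vertex $v$ determines a unique pair $(s, t)$, namely $s \equiv \sum_{i=1}^n v(i)$ and $t = v(\ell(v))$, and so $v$ lies in exactly one set $Y(s, t)$. The pairs indexing the union that defines $W$ are $(1, 1)$ together with $(2, i)$ for $2 \leq i \leq k-1$, and these are all distinct. Consequently $|W|$ is simply the sum $|Y(1,1)| + \sum_{i=2}^{k-1} |Y(2, i)|$.

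Next I would apply Lemma~\ref{lemma:size_y}, abbreviating $M := (k^{n-1} - 1)/(k - 1)$. The crucial point is to identify which terms fall into the $s = t$ case, where the cardinality is $M + 1$ rather than $M$. Exactly two do: $Y(1,1)$ and $Y(2,2)$. The remaining sets $Y(2, i)$ for $3 \leq i \leq k-1$ all satisfy $s \neq t$ and contribute $M$ each, and there are $k - 3$ of them. I would be careful over this count, since the inner union runs over the $k - 2$ values $i = 2, \dots, k-1$, of which precisely one yields $s = t$.

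Summing then gives $|W| = 2(M + 1) + (k - 3)M = (k - 1)M + 2$, and since $(k-1)M = k^{n-1} - 1$, this equals $k^{n-1} + 1$, as required. I expect no serious obstacle here; the only delicate part is the bookkeeping of the two cases in Lemma~\ref{lemma:size_y}, in particular confirming that $Y(2,2)$ (and no other $Y(2,i)$) is the second set receiving the $+1$ correction, and that the number of off-diagonal sets is $k-3$ rather than $k-2$. As a sanity check for the edge case $k = 3$, the inner union collapses to the single set $Y(2,2)$, and one should verify that the formula still yields $k^{n-1} + 1$.
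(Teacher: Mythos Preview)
Your proposal is correct and follows essentially the same approach as the paper: you split $W$ into the disjoint sets $Y(1,1)$, $Y(2,2)$, and $Y(2,i)$ for $3 \le i \le k-1$, apply Lemma~\ref{lemma:size_y} to each, and sum to obtain $k^{n-1}+1$. The only addition is your explicit remark on disjointness, which the paper leaves implicit.
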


\begin{proof}
By Lemma~\ref{lemma:size_y}, we have
\begin{align*}
|W| 
	&= |Y(1, 1)| + |Y(2, 2)| + \sum_{i=3}^{k-1} |Y(2, i)| \\
	&= 2 \cdot \left(\frac{k^{n-1}-1}{k-1} + 1\right) + (k-3) \cdot \frac{k^{n-1}-1}{k-1} \\
	&= k^{n-1} + 1
\end{align*}
as desired.
\end{proof}

\begin{remark}
Note that our proof of Lemma \ref{lemma:size_w} does not hold when $k = 2$, because it involves $Y(2, 2)$ even though $2 \not \in \mathbb{Z}_2$. If we let $W = Y(1, 1)$ instead, then we would have $|W| = 2^{n-1} < \alpha(H(n, 2)) + 1$.
\end{remark}

\subsection{Vertex adjacency in the Hamming graph}
Next, we show that the maximum degree of the subgraph induced by $W$ is $1$. To find the maximum degree of the subgraph, it is natural to find some properties regarding adjacent and non-adjacent vertex pairs in the Hamming graph.

As mentioned earlier, we aim to exploit the property proven in Lemma~\ref{lemma:set_x_independent}. We are using that lemma now to prove the following fact.

\begin{lemma} \label{lemma:set_y_independent}
Let $s \in \mathbb{Z}_k$ and $T \subseteq \mathbb{Z}_k - \{0\}$. Then $\bigcup_{t \in T} Y(s, t)$ is independent.
\end{lemma}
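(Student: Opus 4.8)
The plan is to reduce this statement to Lemma~\ref{lemma:set_x_independent} via a simple containment. First I would observe that for every $t \in T$, the defining conditions of $Y(s, t)$ in Definition~\ref{definition:set_y} include the requirement $\sum_{i=1}^n v(i) \equiv s$. Hence every vertex of $Y(s, t)$ belongs to $X(s)$, i.e.\ $Y(s, t) \subseteq X(s)$ for each $t \in T$. Taking the union over all $t \in T$ then gives $\bigcup_{t \in T} Y(s, t) \subseteq X(s)$.

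Next I would invoke Lemma~\ref{lemma:set_x_independent}, which asserts that $X(s)$ is independent, together with the elementary fact that any subset of an independent vertex set is itself independent: if a set contains no pair of adjacent vertices, then neither does any of its subsets. Applying this to the containment from the previous step yields that $\bigcup_{t \in T} Y(s, t)$ is independent, as required.

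I expect essentially no obstacle here, since the result follows immediately once the containment is noted. The only point requiring a moment of care is to confirm that the extra constraints defining $Y(s, t)$ — namely the exclusion of the zero vector and the condition $v(\ell(v)) = t$ — do not interfere with membership in $X(s)$; but these conditions only shrink the set further relative to $X(s)$, so they are harmless for the purpose of establishing independence.
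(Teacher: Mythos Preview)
Your proposal is correct and follows essentially the same route as the paper's own proof: both establish the containment $\bigcup_{t \in T} Y(s, t) \subseteq X(s)$ and then invoke Lemma~\ref{lemma:set_x_independent} together with the fact that subsets of independent sets are independent. The paper's version is simply terser, stating the containment without spelling out the verification from Definition~\ref{definition:set_y}.
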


\begin{proof}
The vertex set $\bigcup_{t \in T} Y(s, t)$ is a subset of the vertex set $X(s)$. We know $X(s)$ is independent by Lemma~\ref{lemma:set_x_independent}, so $\bigcup_{t \in T} Y(s, t)$ must be independent too.
\end{proof}

Of course, for some $s_1, s_2, t_1, t_2 \in \mathbb{Z}_k$, a vertex in $Y(s_1, t_1)$ and another vertex in $Y(s_2, t_2)$ may be adjacent. In that case, we have the following lemma.

\begin{lemma} \label{lemma:adjacent_vertices}
Let $s_1, s_2, t_1, t_2 \in \mathbb{Z}_k$ with $t_1 \neq 0$, $t_2 \neq 0$, and $t_1 \neq t_2$. Suppose there exists two vertices $v \in Y(s_1, t_1)$ and $w \in Y(s_2, t_2)$ that are adjacent. Then we have the following:
\begin{enumerate}[label=(\roman*)]
\item If $\ell(v) > \ell(w)$, then $s_1 - s_2 \equiv t_1$.
\item If $\ell(v) < \ell(w)$, then $s_2 - s_1 \equiv t_2$.
\item If $\ell(v) = \ell(w)$, then $s_1 - s_2 \equiv t_1 - t_2$.
\end{enumerate}
\end{lemma}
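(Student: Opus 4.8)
The plan is to exploit the fact that adjacent vertices differ in exactly one coordinate. Writing $j$ for that single coordinate at which the adjacent vertices $v$ and $w$ disagree, I would first record the identity
\[\sum_{i=1}^n v(i) - \sum_{i=1}^n w(i) \equiv v(j) - w(j),\]
which holds because $v$ and $w$ agree in every coordinate other than $j$. Since $v \in Y(s_1, t_1)$ and $w \in Y(s_2, t_2)$, the left-hand side is congruent to $s_1 - s_2$. Thus the entire lemma reduces to identifying $v(j) - w(j)$ in each of the three cases, which in turn amounts to pinning down the index $j$.

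Determining $j$ is where the hypotheses on $\ell(v)$ and $\ell(w)$ come into play, and this is the heart of the argument. In case (i), where $\ell(v) > \ell(w)$, I would argue that $v$ and $w$ must differ precisely at coordinate $\ell(v)$: on one hand $v(\ell(v)) = t_1 \neq 0$ by the definition of $Y(s_1, t_1)$, while on the other hand $w(\ell(v)) = 0$ because every coordinate of $w$ beyond its last nonzero index $\ell(w)$ vanishes. Having exhibited a coordinate at which the two vertices disagree, adjacency forces this to be the unique one, so $j = \ell(v)$ and $v(j) - w(j) = t_1 - 0$, giving $s_1 - s_2 \equiv t_1$. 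Case (ii) then follows by the identical argument with the roles of $v$ and $w$ swapped.

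Case (iii), where $\ell(v) = \ell(w)$, is where the assumption $t_1 \neq t_2$ is used. Setting $c := \ell(v) = \ell(w)$, I would observe that $v(c) = t_1$ and $w(c) = t_2$ are unequal, so again $v$ and $w$ disagree at coordinate $c$, forcing $j = c$ and $v(j) - w(j) = t_1 - t_2$, whence $s_1 - s_2 \equiv t_1 - t_2$. The only point requiring care --- and the main, if minor, obstacle --- is the repeated step of inferring $j$ from a single witnessed disagreement; this is legitimate precisely because adjacency in the Hamming graph guarantees exactly one differing coordinate, so any coordinate at which $v$ and $w$ differ must be that coordinate.
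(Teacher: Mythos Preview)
Your proposal is correct and follows essentially the same approach as the paper: in each case you exhibit a coordinate where $v$ and $w$ differ (namely $\ell(v)$, $\ell(w)$, or the common value $c$), invoke adjacency to conclude this is the unique differing coordinate, and then read off $s_1 - s_2$ from the difference of the sums. The paper's proof is organized identically, with the same case split and the same use of the hypothesis $t_1 \neq t_2$ in case (iii).
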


\begin{proof}
Suppose $\ell(v) > \ell(w)$. Then we have $v(\ell(v)) = t_1$ and $w(\ell(v)) = 0$. Consequently, the two vertices differ only at the $\ell(v)$-th coordinate, and thus we have
\[s_1 - s_2 \equiv \sum_{i=1}^n \Big(v(i) - w(i)\Big) \equiv v(\ell(v)) - w(\ell(v)) \equiv t_1 - 0\]
as required. We can similarly prove that $\ell(v) < \ell(w)$ implies $s_2 - s_1 \equiv t_2$.

Next, suppose $\ell(v) = \ell(w)$. Then \[v(\ell(v)) = t_1 \neq t_2 = w(\ell(w)) = w(\ell(v)).\] Hence, the two vertices differ only at the $\ell(v)$-th coordinate, and thus we have \[s_1 - s_2 \equiv \sum_{i=1}^n \Big(v(i) - w(i)\Big) \equiv v(\ell(v)) - w(\ell(v)) \equiv t_1 - t_2\]
as required. 
\end{proof}

As a consequence of Lemma~\ref{lemma:adjacent_vertices}, we have a sufficient condition for two vertices in $H(n, k)$ to be \emph{not} adjacent.

\begin{lemma} \label{lemma:independent_y}
Let $s_1, s_2, t_1, t_2 \in \mathbb{Z}_k$ with $t_1 \neq 0$, $t_2 \neq 0$, and $t_1 \neq t_2$. Suppose that the following statements are all false:
\begin{enumerate}[label=(\roman*)]
\item $s_1 - s_2 \equiv t_1$.
\item $s_2 - s_1 \equiv t_2$.
\item $s_1 - s_2 \equiv t_1 - t_2$.
\end{enumerate}
Then every vertex in $Y(s_1, t_1)$ is adjacent to none of the vertices in $Y(s_2, t_2)$.
\end{lemma}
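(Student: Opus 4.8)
The plan is to prove the statement by contraposition, leaning directly on Lemma~\ref{lemma:adjacent_vertices}. Suppose, toward a contradiction, that there exist vertices $v \in Y(s_1, t_1)$ and $w \in Y(s_2, t_2)$ that are adjacent. Since every vertex in $Y(s_1, t_1)$ and $Y(s_2, t_2)$ is distinct from the zero vector by Definition~\ref{definition:set_y}, the quantities $\ell(v)$ and $\ell(w)$ are well defined by Definition~\ref{definition:ell}. The given hypotheses $t_1 \neq 0$, $t_2 \neq 0$, and $t_1 \neq t_2$ are precisely the hypotheses required to apply Lemma~\ref{lemma:adjacent_vertices}, so that lemma is available with no further assumptions.

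Next I would invoke trichotomy for the integers $\ell(v)$ and $\ell(w)$: exactly one of $\ell(v) > \ell(w)$, $\ell(v) < \ell(w)$, or $\ell(v) = \ell(w)$ must hold. Lemma~\ref{lemma:adjacent_vertices} assigns to each of these three cases one of the congruences, namely (i) in the first case, (ii) in the second, and (iii) in the third. Because the three cases are exhaustive, the mere existence of the adjacent pair $v, w$ forces at least one of (i), (ii), (iii) to hold. This contradicts the hypothesis that all three statements are false, so no such adjacent pair can exist. That is exactly the claim that every vertex in $Y(s_1, t_1)$ is adjacent to none of the vertices in $Y(s_2, t_2)$.

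I expect no genuine obstacle here, since all the substantive work has already been carried out in Lemma~\ref{lemma:adjacent_vertices}; what remains is only the observation that its three cases partition every possibility via trichotomy, after which the contrapositive is immediate. The sole point deserving a moment of care is to confirm that the three congruences listed in Lemma~\ref{lemma:adjacent_vertices} match, case for case, the three statements assumed false in this lemma, so that the contradiction is clean and no stray case escapes.
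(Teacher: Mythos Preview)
Your proposal is correct and follows essentially the same approach as the paper: both argue the contrapositive by invoking Lemma~\ref{lemma:adjacent_vertices}, noting that an adjacent pair would force one of (i), (ii), (iii) to hold, contradicting the hypothesis. Your version is simply more explicit about the trichotomy on $\ell(v)$ and $\ell(w)$, while the paper compresses this into a single sentence.
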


\begin{proof}
By Lemma~\ref{lemma:adjacent_vertices}, a vertex in $Y(s_1, t_1)$ is adjacent to a vertex in $Y(s_2, t_2)$ only if at least one of the three statements above is true. Since we assume that all three statements are false, the two vertices must be not adjacent.
\end{proof}

Unfortunately, Lemma~\ref{lemma:independent_y} cannot be applied to all pairs of vertices in $W$. In particular, the lemma does not say anything about the relationship between vertices in $Y(1, 1)$ and vertices in $Y(2, 2)$. For that, we have the following lemma.

\begin{lemma} \label{lemma:adjacent_y}
Let $s_1, s_2, t_1, t_2 \in \mathbb{Z}_k$ with $t_1 \neq 0$, $t_2 \neq 0$, and $t_1 \neq t_2$. If $s_1 - s_2 \equiv t_1 - t_2$, then every vertex in $Y(s_1, t_1)$ is adjacent to exactly one vertex in $Y(s_2, t_2)$.
\end{lemma}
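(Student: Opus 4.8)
The plan is to show, for an arbitrary $v \in Y(s_1, t_1)$, that the unique neighbour of $v$ inside $Y(s_2, t_2)$ is the vertex obtained from $v$ by changing its last nonzero coordinate from $t_1$ to $t_2$. Write $c := \ell(v)$, so that $v(c) = t_1$ and $v(j) = 0$ for all $j > c$. I would define $w$ by setting $w(j) := v(j)$ for every $j \neq c$ and $w(c) := t_2$, and first verify that $w \in Y(s_2, t_2)$. Since $w(c) = t_2 \neq 0$ while $w(j) = v(j) = 0$ for all $j > c$, the vertex $w$ is nonzero with $\ell(w) = c$ and $w(\ell(w)) = t_2$; moreover its coordinate sum satisfies $\sum_{i=1}^n w(i) \equiv \sum_{i=1}^n v(i) - t_1 + t_2 \equiv s_1 - t_1 + t_2 \equiv s_2$, where the final congruence is exactly the hypothesis $s_1 - s_2 \equiv t_1 - t_2$. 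Because $v$ and $w$ differ only at coordinate $c$ (using $t_1 \neq t_2$), they are adjacent, so $v$ has at least one neighbour in $Y(s_2, t_2)$.

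For uniqueness, I would take any $w' \in Y(s_2, t_2)$ adjacent to $v$ and argue that $w' = w$. The crucial step is to pin down $\ell(w')$. Applying Lemma~\ref{lemma:adjacent_vertices} to the adjacent pair $v \in Y(s_1, t_1)$ and $w' \in Y(s_2, t_2)$, at least one of its three congruences must hold, and the hypothesis eliminates the first two: if $s_1 - s_2 \equiv t_1$, then combining with $s_1 - s_2 \equiv t_1 - t_2$ forces $t_2 \equiv 0$, and if $s_2 - s_1 \equiv t_2$, then it forces $t_1 \equiv 0$, each contradicting $t_1, t_2 \neq 0$. Hence only case (iii) can occur, which by the lemma means $\ell(v) = \ell(w')$, so $\ell(w') = c$. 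Then $w'(c) = t_2 \neq t_1 = v(c)$ and $w'(j) = 0$ for $j > c$, so $v$ and $w'$ already differ at coordinate $c$; since adjacency allows exactly one differing coordinate, $w'$ must agree with $v$ at every coordinate other than $c$, giving $w' = w$.

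I expect the only delicate point to be the case analysis on $\ell(v)$ versus $\ell(w')$: I must invoke Lemma~\ref{lemma:adjacent_vertices} with its indices matched correctly to $(s_1, t_1)$ and $(s_2, t_2)$, and the elimination of cases (i) and (ii) genuinely relies on $t_1 \not\equiv 0$ and $t_2 \not\equiv 0$. Everything else reduces to the short direct computation of the coordinate sum of $w$. Once existence of $w$ and uniqueness are both established, the conclusion that $v$ is adjacent to exactly one vertex of $Y(s_2, t_2)$ follows at once, and since $v$ was arbitrary this proves the lemma.
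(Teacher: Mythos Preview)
Your proof is correct and follows essentially the same approach as the paper: both define the candidate neighbour $w$ by replacing the $\ell(v)$-th coordinate of $v$ from $t_1$ to $t_2$, verify $w \in Y(s_2,t_2)$ via the coordinate-sum computation, and establish uniqueness by invoking Lemma~\ref{lemma:adjacent_vertices} to rule out cases (i) and (ii) using $t_1,t_2 \neq 0$ together with the hypothesis $s_1 - s_2 \equiv t_1 - t_2$. The only cosmetic difference is that you prove existence before uniqueness, whereas the paper first argues uniqueness (assuming a neighbour exists) and then checks that the forced candidate actually lies in $Y(s_2,t_2)$.
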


\begin{proof}
Let $v$ be an arbitrary vertex in $Y(s_1, t_1)$, and suppose there exists a vertex $w$ in  $Y(s_2, t_2)$ that is adjacent to $v$.  From the three conditions $t_1 \neq 0$, $t_2 \neq 0$, and $s_1 - s_2 \equiv t_1 - t_2$, we know that $s_1 - s_2 \not \equiv t_1$ and $s_2 - s_1 \not \equiv t_2$. So, by Lemma~\ref{lemma:adjacent_vertices}, we can deduce that $\ell(v) = \ell(w)$. It follows that \[v(\ell(v)) = t_1 \neq t_2 = w(\ell(w)) = w(\ell(v)),\] so the vertices differ at the $\ell(v)$-th coordinate. As a consequence, if $w$ exists, then $w$ must be the unique vertex obtained by replacing the $\ell(v)$-th coordinate of $v$ from $t_1$ to $t_2$.

It remains to verify that such $w$ is in $Y(s_2, t_2)$. Indeed, we know $w(\ell(w)) = t_2$ and
\[\sum_{i=1}^n w(i) \equiv \left(\sum_{i=1}^n v(i)\right) - v(\ell(v)) + w(\ell(v)) \equiv s_1 - t_1 + t_2 \equiv s_2,\]
so the vertex $w$ is in fact a vertex in $Y(s_2, t_2)$.
\end{proof}

We are finally ready to find the maximum degree of the induced subgraph.

\begin{lemma} \label{lemma:maximum_degree}
The maximum degree of the subgraph of $H(n, k)$ induced by $W$ is $1$.
\end{lemma}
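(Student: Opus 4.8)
The plan is to classify all edges of the induced subgraph according to the two ``blocks'' making up $W$, namely $Y(1,1)$ and $\bigcup_{i=2}^{k-1} Y(2,i)$, and then simply count the neighbours of each vertex. First I would observe that there are no edges inside either block: $Y(1,1)$ is independent by Lemma~\ref{lemma:set_y_independent} (taking $s = 1$ and $T = \{1\}$), and $\bigcup_{i=2}^{k-1} Y(2,i)$ is independent by the same lemma (taking $s = 2$ and $T = \{2, 3, \dots, k-1\} \subseteq \mathbb{Z}_k - \{0\}$). Hence every edge of the subgraph must join a vertex of $Y(1,1)$ to a vertex of some $Y(2,i)$ with $2 \leq i \leq k-1$.

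Next I would examine each cross pair $Y(1,1)$ and $Y(2,i)$ by testing the three congruences of Lemmas~\ref{lemma:independent_y} and~\ref{lemma:adjacent_y} with $(s_1, t_1) = (1,1)$ and $(s_2, t_2) = (2, i)$; note that $t_1 \neq 0$, $t_2 \neq 0$, and $t_1 \neq t_2$ all hold since $i \geq 2$. A short computation using $k \geq 3$ shows that $s_1 - s_2 \equiv t_1$ and $s_2 - s_1 \equiv t_2$ are both false for every such $i$, while $s_1 - s_2 \equiv t_1 - t_2$ reduces to $i \equiv 2$. Thus for $3 \leq i \leq k-1$ all three congruences fail, so by Lemma~\ref{lemma:independent_y} there are no edges between $Y(1,1)$ and $Y(2,i)$; whereas for $i = 2$ only the third congruence holds, so Lemma~\ref{lemma:adjacent_y} applies and every vertex of $Y(1,1)$ has exactly one neighbour in $Y(2,2)$.

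To finish I would tally the degrees. A vertex of $Y(1,1)$ can only be adjacent to vertices of the second block, and among those only $Y(2,2)$ contributes, giving exactly one neighbour, so its degree is $1$. For a vertex of $Y(2,2)$ I would apply Lemma~\ref{lemma:adjacent_y} again with the roles reversed, that is, $(s_1, t_1) = (2,2)$ and $(s_2, t_2) = (1,1)$; the congruence $s_1 - s_2 \equiv t_1 - t_2$ is symmetric and still holds, giving exactly one neighbour in $Y(1,1)$, and since $Y(2,2)$ lies inside the independent block it has no other neighbours, so its degree is $1$. Finally, a vertex of $Y(2,i)$ with $i \geq 3$ has no neighbour in $Y(1,1)$ and none inside its own block, so its degree is $0$. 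Every vertex therefore has degree $0$ or $1$, and since $Y(1,1)$ and $Y(2,2)$ are nonempty the matching between them produces at least one edge, so the maximum degree is exactly $1$.

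The computations themselves are routine; the one point needing care is ensuring that every potential neighbour of a given vertex is accounted for. This is precisely why the independence of the two blocks (which reduces all adjacencies to cross-block pairs) and the symmetric re-application of Lemma~\ref{lemma:adjacent_y} to handle $Y(2,2)$ are the load-bearing steps, and I do not anticipate any genuine obstacle beyond this bookkeeping.
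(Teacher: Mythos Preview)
Your proposal is correct and follows essentially the same approach as the paper: you split $W$ into the two independent blocks via Lemma~\ref{lemma:set_y_independent}, rule out cross edges to $Y(2,i)$ for $i \geq 3$ via Lemma~\ref{lemma:independent_y}, and use Lemma~\ref{lemma:adjacent_y} in both directions for the pair $Y(1,1)$, $Y(2,2)$. Your write-up is in fact slightly more careful than the paper's, since you explicitly verify the congruences using $k \geq 3$ and note the nonemptiness of $Y(1,1)$ and $Y(2,2)$ to conclude the maximum degree is exactly $1$ rather than $0$.
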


\begin{proof}
By Lemma~\ref{lemma:set_y_independent}, the vertex sets $Y(1, 1)$ and $\bigcup_{i=2}^{k-1} Y(2, i)$ are both independent. It remains to check the adjacency between vertices in $Y(1, 1)$ and vertices in $Y(2, i)$ for each $2 \leq i \leq k-1$.
\begin{itemize}
\item Suppose $i = 2$. By Lemma~\ref{lemma:adjacent_y}, every vertex in $Y(1, 1)$ is adjacent to exactly one vertex in $Y(2, 2)$, and every vertex in $Y(2, 2)$ is adjacent to exactly one vertex in $Y(1, 1)$ too. 
\item Suppose $3 \leq i \leq k-1$. By Lemma~\ref{lemma:independent_y}, every vertex in $Y(1, 1)$ is adjacent to none of the vertices in $Y(2, i)$.
\end{itemize}
Therefore, the induced subgraph has maximum degree $1$ due to the vertices in $Y(1, 1)$ and $Y(2, 2)$.
\end{proof}

\begin{proof}[Proof of Theorem~\ref{theorem:main}]
Combine Lemma~\ref{lemma:size_w} and Lemma~\ref{lemma:maximum_degree}.
\end{proof}

\textbf{Acknowledgements.} This work is based on the author’s undergraduate final year project at National University of Singapore. The author would like to thank Professor Dilip Raghavan for introducing and discussing the Sensitivity Conjecture along with this Hamming graph problem.

\begin{bibdiv}
\begin{biblist}

\bib{Chung}{article}{
    title = {On induced subgraphs of the cube},
    author = {F. R. K. Chung},
    author = {Z. F\"{u}redi}, 
    author = {R. L. Graham}, 
    author = {P. Seymour}, 
    journal = {Journal of Combinatorial Theory, Series A},
    volume = {49},
    number = {1},
    year = {1988},
    pages = {180--187},
}

\bib{Dong}{article}{
   title = {On induced subgraphs of the {Hamming} graph},
   author = {D. Dong},
   journal = {Journal of Graph Theory},
   volume = {96},
   number = {1},
   year = {2021},
   pages = {160--166},
}

\bib{HKP}{article}{
    title = {Variations on the Sensitivity Conjecture}, 
    author = {P. Hatami}, 
    author = {R. Kulkarni},
    author = {D. Pankratov},
    journal = {Theory of Computing Library Graduate Surveys},
    number = {4},
    year = {2011},
    pages = {1--27}
}

\bib{Huang}{article}{
    title = {Induced subgraphs of hypercubes and a proof of the {Sensitivity} {Conjecture}},
    author = {H. Huang},
    journal = {Annals of Mathematics},
    volume = {190},
    number = {3},
    year = {2019},
    pages = {949--955},
}

\bib{KSP}{article}{
    title = {On the resolution of the sensitivity conjecture}, 
    author = {R. Karthikeyan}, 
    author = {S. Sinha}, 
    author = {V. Patil}, 
    journal = {Bulletin of the American Mathematical Society}, 
    volume = {57}, 
    number = {4}, 
    year = {2020}, 
    pages = {615--638}
}

\end{biblist}
\end{bibdiv}

\end{document}